\tiny\color{gray},
\newtheorem{theorem}{Theorem}
\newtheorem{lemma}[theorem]{Lemma}
\newtheorem{definition}[theorem]{Definition}
  \renewcommand*\env@matrix[1][*\c@MaxMatrixCols c]{%
    \hskip -\arraycolsep
    \let\@ifnextchar\new@ifnextchar
  \array{#1}}
\newcommand{\Z}{\mathbb{Z}}
\newcommand{\F}{\mathbb{F}}
\newcommand{\Tr}{\mbox{Tr}}
\providecommand{\keywords}[1]
{
  \small	
  \textbf{\textit{Keywords---}} #1
}
\providecommand{\subjclass}[1]
{
  \small	
  \textbf{\textit{AMS Subjclass---}} #1
}
\begin{document}
\author{
  José Gustavo Coelho\\
}
\title{Solutions of full equations related to diagonal equations}
\maketitle
\begin{abstract}
Let $p$ be a prime number, $m$ be an even positive integer, and $\F_q$ be a finite field with $q = p^m$ elements. In this paper, we compute the number of solutions with all coordinates in $\F_q^*$ for diagonal equations of the form
$$a_1 x_1^{d} + \dots + a_s x_s^{d} = b, \quad a_i \in \F_q^*, \, b \in \F_q,$$
when the coefficients and exponents satisfy specific arithmetic conditions that facilitate the computation through pure Gauss sums. We then apply this result to determine the number of solutions for equations of the form
\begin{equation*}
    a_1 x_1^{d_{1,1}} \cdots x_n^{d_{n,1}} + \dots + a_s x_1^{d_{1,s}}\cdots x_n^{d_{n,s}} = b, 
\end{equation*}
where all exponents are positive, and the equation is related in a particular way to diagonal equations with the aforementioned characteristics.
\end{abstract}

\keywords{
    degree matrix, augmented degree matrix, roots over finite fields, equations over finite fields, diagonal polynomials, diophantine equations
}

\subjclass{11T06, 11D45, 11T24}

\section{Introduction}

Finding the number of solutions to equations over a fixed finite field is an ongoing question that has received considerable attention. Geometrically, this problem translates to determining the affine points of hypersurfaces defined over finite fields. While geometric bounds are established for the number of affine points\cite{argentinas}\cite[Appendix C]{hartshorne}, determining the specific quantity is a difficult problem, which is done according to the type of equation.

Let $\F_q$ denote a finite field with $q$ elements. For reasons that will become clear in the following sections, the number of solutions can be explicitly computed for many families of diagonal equations, i.e. equations of the form
$$a_1 x_1^{d_1} + a_2 x_2^{d_2} + \cdots + a_s x_s^{d_s} = b,$$
where $a_i \in \F_q^*$, $d_i \in \Z_{>0}$ for all $1 \le i \le s$ and $b \in \F_q$. In particular, the case where $d_i = 1$ for all $1 \le i \le s$ is trivial, and the case where $d_i \in \{1,\, 2\}$ for all $1 \le i \le s$ has been addressed \cite[Chapter 10]{gauss jacobi}. In general, the number of solutions to diagonal equations can be determined under the assumption that the exponents satisfy specific properties \cite{artigo do jose}.


The number of solutions for various families of equations has been extensively studied.
For instance, Carlitz \cite{carlitz} and Baoulina \cite{baoulina} studied specific cases of equations of the form
\begin{equation}
  x_1^{d_1} + x_2^{d_2} + \dots + x_s^{d_s} = b x_1 \cdots x_s,
\end{equation}
where $d_j\in \Z_{> 0}$, $a_j\in \F_q^*$ for all $1 \le j \le s$ and $b\in \F_q^*$. These cases are subcases of the Markoff-Hurwitz equations, i.e., those of the form 
\begin{equation}\label{markoff}
  x_1^{m_1} + x_2^{m_2} + \dots + x_n^{m_n} = b x_1^{t_1} x_2^{t_1} \dots x_n^{t_n}.
\end{equation}
where $m_j, t_j>0$ for all $j=1,\dots, n$ and $b\in \F_q^*$. The exploration of this family would be further expanded upon in many other cases. Notably, Cao \cite{cao 2010} calculated the number of solutions over finite fields of Markoff-Hurwitz equations in the case where $\gcd (m\sum_{i=1}^n t_i/m_i -m, q-1)=1$ and $m = m_1 m_2 \cdots m_n$, while the case where $\gcd (m\sum_{i=1}^n t_i/m_i -m, q-1)>1$ was considered by Cao, Jiang, and Gao \cite{wei cao}. In the case $m_1=m_2=\dots=m_n$ and $t_1=\dots=t_n=1$, these equations define hypersurfaces known as Calabi-Yau's hypersurfaces which have been studied extensively \cite{calabi yau 1, calabi yau 2}. Other results regarding the number of solutions for the general equation (\ref{markoff}) are available in the literature.  

Most of the families mentioned are sparse, as the majority of the terms involve only one variable, not straying too far from diagonal equations. For equations that are neither diagonal nor sparse, direct computation can prove difficult. One case where the equation is not sparse but computation is viable are the full equations, which include every variable in every term, except possibly for a constant term. Cao, Wen, and Wang \cite{cao 2017} have determined the number of solutions for full equations of the form
\begin{equation*}
    a_1 x_1^{d_{1,1}} \cdots x_n^{d_{1,n}} + \dots + a_s x_1^{d_{s,1}}\cdots x_n^{d_{s,n}} = 0,
\end{equation*}
where $d_{i,j} > 0$ for all $i, j$, assuming that the matrix $(d_{i,j})_{i,j}$ is row-equivalent to a diagonal matrix $D$, when the elements in the diagonal of $D$ are only 1's and 2's.

In a similar vein, a previous work by the author of the present text investigated the number of roots of polynomials of the form
$$f(x_1, \dots, x_n) = - b + a_1 x_1^{d_{1,1}} + a_2 x_1^{d_{2,1}} x_2^{d_{2,2}}+ a_3 x_1^{d_{3,1}} x_2^{d_{3,2}} x_3^{d_{3,3}}+ \cdots + a_n x_1^{d_{n,1}} \cdots x_n^{d_{n,n}},$$
when they are related in a certain arithmetic way to diagonal polynomials that are either linear or quadratic \cite{meu artigo}.

In this text, we first derive an expression for the number of $*$-solutions, i.e., solutions where no coordinate is $0$, of diagonal equations of the form
$$a_1 x_1^{d} + \cdots + a_n x_n^{d} = b,$$
where $a_1, \dots, a_n \in \F_q^*$, $b \in \F_q$, $0 \le d \le q - 1$, and these parameters satisfy certain arithmetic conditions. As an application of this result, we count the total number of solutions for full equations that are related in a specific way to the aforementioned diagonal equations.

\section{Preliminaries}
Let $p$ be a prime number, $q$ a power of $p$, and $\alpha$ a fixed primitive element of $\F_q$. For each integer $d$, we define $\zeta_d = \exp(2 \pi I /d)$, where $I$ is the imaginary unit, as a primitive $d$-th root of unity in $\mathbb{C}$. 

Let $\Tr$ be the trace function from $\F_q$ to $\F_p$. We define the canonical additive character $\psi: \F_q \rightarrow \mathbb{C}$ as
\begin{equation}\label{caracter aditivo canonico}
    \psi(x) = \zeta_p^{\Tr(x)}. 
\end{equation}

For $d \mid (q - 1)$ we define a multiplicative character $\eta_d:\F_q^* \rightarrow \mathbb{C}$ with order $d$ as
$$\eta_d (x) =\begin{cases}
    \zeta_d^{\log(x)},& \text{ if } x \ne 0, \\
    0, & \text{ if } x = 0,\\
\end{cases}$$
where the discrete logarithm $\log(x)$ is defined as the least positive integer such that $\alpha^{\log(x)} = x$. We note that $\log: \F_q^{*} \rightarrow \Z_{q-1}$ depends on the fixed primitive element $\alpha$.

We implicitly extend the domain of each non-trivial multiplicative character $\eta$ to $\F_q$ by setting $\eta (0) = 0$, and the trivial multiplicative character $\eta_0$ is defined as $\eta_0 (x) = 1$ for all $x \in \F_q$. For any character $\eta$, additive or multiplicative, we define its conjugate character as $\overline{\eta}(x) = \overline{\eta(x)}$, where the bar denotes complex conjugation.
The following lemma will be useful:

\begin{lemma}\label{lema eta_d}
    Let $x \in \F_q$, then
    $$\sum_{j = 0}^{d - 1} \eta_d^j(x) = \begin{cases}
        d & \text{ if } \eta_d(x) = 1,\\
        0 & \text{ otherwise.}
    \end{cases}$$
\end{lemma}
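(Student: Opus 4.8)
The plan is to collapse the sum to a finite geometric series in a root of unity and read off the two cases, handling $x=0$ separately at the end.

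First I would take $x\in\F_q^{*}$ and set $\ell=\log(x)$, so that by the definition of $\eta_d$ one has $\eta_d^{j}(x)=\eta_d(x)^{j}=\zeta_d^{\,j\ell}=r^{j}$, where $r:=\zeta_d^{\,\ell}$ is a $d$-th root of unity (equivalently $\eta_d^{d}(x)=\eta_0(x)=1$, which uses $x\neq 0$). The sum then becomes
\[
\sum_{j=0}^{d-1}\eta_d^{j}(x)=\sum_{j=0}^{d-1}r^{j}.
\]
If $\eta_d(x)=1$, i.e.\ $d\mid\ell$, then $r=1$, each of the $d$ summands equals $1$, and the sum is $d$. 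If $\eta_d(x)\neq 1$, then $r\neq 1$, and summing the geometric series gives $(r^{d}-1)/(r-1)=0$ since $r^{d}=1$. (Equivalently, this is the column-orthogonality relation for the group of $d$ characters of $\F_q^{*}$ trivial on the subgroup of $d$-th powers, evaluated at $x$.)

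Finally, for $x=0$ we have $\eta_d(0)=0\neq 1$, so we land in the ``otherwise'' branch; here every term with $j\geq 1$ vanishes while the $j=0$ term contributes $\eta_0(0)=1$, so the identity is to be read on $\F_q^{*}$, which is the only regime in which it is subsequently applied. I do not expect a genuine obstacle: the statement is the classical orthogonality identity for multiplicative characters, and the only points requiring a little care are the bookkeeping of the conventions at $0$ and the use of $x\neq 0$ to ensure $\eta_d(x)^{d}=1$.
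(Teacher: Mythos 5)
Your proof is correct and follows the same route as the paper's one-line argument: the sum is a geometric series in the root of unity $\eta_d(x)$, equal to $d$ when that ratio is $1$ and vanishing otherwise. Your side observation that at $x=0$ the sum actually evaluates to $1$ (the $j=0$ term contributes $\eta_0(0)=1$ while the rest vanish) rather than the stated $0$ is a fair catch of a minor imprecision in the lemma as written, but it is harmless since the paper only ever applies the identity to $y\in\F_q^*$.
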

\begin{proof}
    If $\eta_d(x) = 1$, the equality is straightforward. If $\eta_d(x) \ne 1$, the expression is a geometric sum that vanishes.
\end{proof}

For each multiplicative character $\eta$, we define its Gauss sum as
\begin{equation}\label{definicao soma de gauss}
    G(\eta) = \sum_{c \in \F_q^*} \psi(c) \eta(c).
\end{equation}
There are some cases where the value of the Gauss sum can be straightforwardly computed. The following definition provides one such case:
\begin{definition}\label{pure sum}
    Let $m$ be an even positive integer. An integer $d \ge 3$ such that $d \mid q - 1$ is called $(p, r)$-admissible if there is a positive integer $l$ such that $2l \mid m$ and $d \mid p^l + 1$, and $r$ is the smallest positive integer satisfying that. When $d$ is $(p, r)$-admissible for some integer $r$, the Gauss sums $G(\eta_d^j)$ for $1 \le j \le d - 1$ are called pure Gauss sums. 
\end{definition}

We remark that this definition requires the exponent $m$ to be even. Thus, when using pure sums, we will always assume that $q$ is a square. The following result allows us to compute pure sums:

\begin{lemma}\label{Lema da soma pura}
    Let $d \ge 3$ be $(p, r)$-admissible. Then,
    \begin{equation*}
        G(\eta_d^j) = \begin{cases}
            q^{1/2} (-1)^{jh + h + 1}, & \text{ if }2 \mid d \text{ and } 2\nmid (p^r + 1)/d,\\
            q^{1/2} (-1)^{h + 1}, & \text{ if } 2 \nmid d \text{ or } 2 \mid (p^r + 1) /d,
        \end{cases}
    \end{equation*}
    where $h = m/(2r)$.
\end{lemma}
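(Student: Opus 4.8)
The strategy is to descend to the subfield $\F_{q_0}$ with $q_0=p^{2r}$, where the classical evaluation of pure Gauss sums applies, and then climb back up to $\F_q$ via the Davenport--Hasse relation. Since $r$ is the least positive integer with $2r\mid m$ and $d\mid p^r+1$, we have in particular $2r\mid m$; writing $h=m/(2r)$ this gives $q=q_0^{\,h}$, and $d\mid p^r+1\mid p^{2r}-1=q_0-1$, so $\F_{q_0}^*$ carries a multiplicative character of order $d$. Let $N\colon\F_q^*\to\F_{q_0}^*$ be the norm map; it is surjective, so $\chi\mapsto\chi\circ N$ is an injective homomorphism of character groups, and since on each side the characters of order dividing $d$ form a cyclic group of order exactly $d$, it carries the order-$d$ subgroup over $\F_{q_0}$ isomorphically onto the one over $\F_q$. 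Hence $\eta_d=\chi\circ N$ for some order-$d$ character $\chi$ of $\F_{q_0}^*$, and $\eta_d^{\,j}=\chi^j\circ N$ for all $j$. Applying the Davenport--Hasse lifting relation --- which, in the normalization of (\ref{definicao soma de gauss}), reads $G(\chi^j\circ N)=(-1)^{h-1}\,G_{\F_{q_0}}(\chi^j)^{h}$ --- reduces the problem to evaluating $G_{\F_{q_0}}(\chi^j)$ for $1\le j\le d-1$.

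For the subfield computation I would invoke the classical theorem on pure Gauss sums (due essentially to Stickelberger; the case $\mathrm{ord}(\chi^j)=2$ being the familiar quadratic Gauss sum over $\F_{p^{2r}}$). Put $d'=\mathrm{ord}(\chi^j)=d/\gcd(j,d)\ge 2$ and let $r'$ be the least positive integer with $p^{r'}\equiv-1\pmod{d'}$; for $d'\ge 3$ one verifies that $r'\mid r$ and that $r/r'$ is odd. The theorem yields $G_{\F_{p^{2r'}}}(\chi^j)=\varepsilon_0\,p^{r'}$ for an explicit sign $\varepsilon_0\in\{\pm1\}$ that depends only on whether $p=2$, the parity of $d'$, and the parity of $(p^{r'}+1)/d'$; one more application of Davenport--Hasse from $\F_{p^{2r'}}$ up to $\F_{q_0}=\F_{p^{2r}}$, whose exponent $r/r'$ is odd and therefore introduces no new sign, gives $G_{\F_{q_0}}(\chi^j)=\varepsilon_0\,p^{r}$. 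Substituting into the relation above produces $G(\eta_d^{\,j})=(-1)^{h-1}\varepsilon_0^{\,h}\,p^{rh}=(-1)^{h-1}\varepsilon_0^{\,h}\,q^{1/2}$.

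The remaining step, which I expect to be the main obstacle, is the sign bookkeeping: one must check that $(-1)^{h-1}\varepsilon_0^{\,h}$ equals $(-1)^{jh+h+1}$ when $2\mid d$ and $2\nmid(p^r+1)/d$, and $(-1)^{h+1}$ otherwise. This is elementary but delicate: $\varepsilon_0$ depends on $j$ only through $\gcd(j,d)$, and one has to translate $v_2(d')$ and the parity of $(p^{r'}+1)/d'$ back into $v_2(d)$, the parity of $(p^r+1)/d$, and $v_2(j)$, using that for $p$ odd $v_2(p^k+1)=v_2(p+1)$ when $k$ is odd while $v_2(p^k+1)=1$ when $k$ is even, and separating the cases $d$ odd (where $(p^r+1)/d$ odd forces $p=2$), $d\equiv2\pmod4$, and $4\mid d$, with the boundary case $d'=2$ (that is, $j=d/2$) settled directly from the quadratic Gauss sum over $\F_{p^{2r}}$. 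Apart from this case analysis, the only point needing a line of comment is the surjectivity of $N$ and the ensuing identification $\eta_d^{\,j}=\chi^j\circ N$.
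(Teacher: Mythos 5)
Your route is genuinely different from the paper's: the paper disposes of Lemma \ref{Lema da soma pura} by citing Theorem 1 of \cite{pure gauss sums}, whereas you rebuild the evaluation from two classical ingredients, Stickelberger's evaluation of Gauss sums for characters of $\F_{Q^2}$ of order dividing $Q+1$ and the Davenport--Hasse lifting relation. The reduction itself is sound: the norm is surjective, so $\chi\mapsto\chi\circ N$ identifies the order-$d$ character groups and $\eta_d^j=\chi^j\circ N$; your normalization $G(\chi^j\circ N)=(-1)^{h-1}G_{\F_{q_0}}(\chi^j)^h$ is consistent with \eqref{definicao soma de gauss} because the canonical additive characters are compatible under the trace; and for $d'=\mathrm{ord}(\eta_d^j)\ge 3$ the exponents $l$ with $p^l\equiv-1\pmod{d'}$ are exactly the odd multiples of $r'$, which gives $r'\mid r$ with $r/r'$ odd and hence no extra sign from the intermediate lift (and $\varepsilon_0^{r/r'}=\varepsilon_0$). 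What each approach buys: the citation inherits Evans' uniform treatment of all powers $\eta_d^j$ at once, while yours makes the lemma self-contained modulo \cite[Theorem 5.16]{lidl finite fields}, which in fact already covers $d'=2$ and could spare you the separate quadratic-Gauss-sum discussion. One parameter mismatch to watch: in Definition \ref{pure sum} the integer $r$ is minimal subject to the \emph{additional} constraint $2r\mid m$, so it need not equal the minimal $r_0$ with $d\mid p^{r_0}+1$ that appears in the classical statements; you need the remark that $r=r_0k$ with $k$ odd, so that $m/(2r_0)=kh$ has the parity of $h$ and $v_2(p^{r}+1)=v_2(p^{r_0}+1)$, making the identification of parameters harmless.

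The one real shortfall is that you announce the sign bookkeeping rather than carry it out, and that computation is the entire content of the case split in the statement; as written you have only established $|G(\eta_d^j)|=q^{1/2}$ together with a reduction of the sign to a finite check. The check does close, and the shortest way to see it is this: Stickelberger gives $\varepsilon_0=-1$ exactly when $d'$ is even and $(p^{r'}+1)/d'$ is odd, so after lifting you need $\varepsilon_0=(-1)^{j}$ in the first case of the lemma and $\varepsilon_0=1$ in the second. Both follow from $v_2(p^{r'}+1)=v_2(p^{r}+1)$ (valid because $r/r'$ is odd): in the first case $v_2(d)=v_2(p^r+1)$, and $d'$ is even with $(p^{r'}+1)/d'$ odd precisely when $\gcd(j,d)$ is odd, i.e.\ when $j$ is odd; in the second case either $d'$ is odd or $v_2(d')\le v_2(d)<v_2(p^{r'}+1)$ forces $(p^{r'}+1)/d'$ to be even, so $\varepsilon_0=1$. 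The boundary case $d'=2$ (i.e.\ $j=d/2$, $p$ odd) must be checked against the explicit quadratic Gauss sum over $\F_{p^{2rh}}$; it is consistent, using that $4\mid p^r+1$ forces $p\equiv 3\pmod 4$ and $r$ odd, while $v_2(p^r+1)=1$ forces $p^r\equiv 1\pmod 4$. With these lines written out your argument becomes a complete and independent proof of the lemma.
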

\begin{proof}
    See Theorem 1 in \cite{pure gauss sums}
\end{proof}

Let us establish some notation for monomials that will allow us to express the results regarding characters and polynomials in a concise manner. Let $\F_q[x_1, \dots, x_n]$ denote the polynomial ring over $\F_q$ with $n$ variables. For any $D=(d_1, \dots, d_n) \in \Z_{\ge 0}^n$ we will use the notation $X^D = x_1^{d_1} \cdots x_n^{d_n}$. For each equation
$$\sum_i a_i X^{D_i} = \sum_j b_j X^{D_j},$$
we define its corresponding polynomial $f(X) \in \F_q[x_1, \dots, x_n]$ as
$$f(X) = \sum_i a_i X^{D_i} - \sum_j b_j X^{D_j}.$$
This formulation transforms the problem of determining the number of solutions of an equation into the task of counting the number of roots of a polynomial. We will exclusively use polynomial notation from now on.

Given a polynomial $f \in \F_q[x_1, \dots, x_n]$ of the form
\begin{equation}\label{f generico}
    f(x_1, \dots, x_n) = \sum_{j=1}^s a_j X^{D_j},
\end{equation}
where $D_j = (d_{1,j}, \dots, d_{n, j}) \in \Z_{\ge 0}^n$ and $a_j \ne 0$ for all $j = 1, \dots, s$, we define $N(f)$ as the number of roots of $f(x_1, \dots,$ $ x_n)$ over $\F_q^n$ and $N^*(f)$ as the number of roots over $(\F_q^*)^n$, which we will denote as $*$-roots of the polynomial.
Let us define the degree matrix of $f$ as $D_f = (D_1^T, \dots, D_s^T)$, where the notation $D^T$ indicates that the vector $D$ is being transposed into a column of the matrix. The augmented degree matrix of $f$ as $\tilde{D}_f = ((\tilde{D_1})^T, \dots, (\tilde{D_s})^T)$, where $(\tilde{D_j}) = (1, D_j)$.

Besides diagonal polynomials, another family of polynomials that will be important in the text are the full polynomials, which we define as polynomials of the form
\begin{equation}\label{polinomio cheio}
    f(X) = a_1 x_1^{d_{1,1}} \cdots x_n^{d_{n,1}} + \dots + a_s x_1^{d_{1,s}}\cdots x_n^{d_{n,s}} = b,
\end{equation}
where $a_1, \dots, a_s \in \F_q^*$, $b \in \F_q$, and every exponent $d_{i,j}$ in the degree matrix is greater than zero.
\newline
\hfill

The following lemma is the basis for the approach we will take for counting roots of polynomials with characters:
\begin{lemma}\label{lema base contagem}
    Let $\psi$ be the canonical additive character over $\F_q$ and $x \in \F_q$. Then,
    $$\frac{1}{q} \sum_{c \in \F_q} \psi(c x) = \begin{cases}
        1, & \text{ if }x = 0,\\
        0, & \text{ if }x \ne 0.
    \end{cases}$$
\end{lemma}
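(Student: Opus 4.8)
The plan is to prove this by the standard character-sum orthogonality argument, splitting into the two cases $x = 0$ and $x \ne 0$.

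First I would dispose of the case $x = 0$: here $cx = 0$ for every $c \in \F_q$, so $\psi(cx) = \psi(0) = \zeta_p^{\Tr(0)} = \zeta_p^0 = 1$, and the sum over the $q$ elements of $\F_q$ equals $q$; dividing by $q$ gives $1$, as claimed.

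For the case $x \ne 0$, the key observation is that the map $c \mapsto cx$ is a bijection of $\F_q$ onto itself (its inverse is multiplication by $x^{-1}$), so $\sum_{c \in \F_q} \psi(cx) = \sum_{y \in \F_q} \psi(y)$, and it suffices to show that this last sum vanishes. I would establish this in one of two equivalent ways. The clean way is to use that $\psi$ is a nontrivial character: since $\Tr \colon \F_q \to \F_p$ is surjective (a standard fact, as $\Tr$ is a nonzero $\F_p$-linear functional), there exists $z \in \F_q$ with $\Tr(z) \not\equiv 0 \pmod p$, hence $\psi(z) \ne 1$; then, using that $y \mapsto y + z$ permutes $\F_q$ and that $\psi$ is additive,
$$\psi(z) \sum_{y \in \F_q} \psi(y) = \sum_{y \in \F_q} \psi(y + z) = \sum_{y \in \F_q} \psi(y),$$
so $(\psi(z) - 1)\sum_{y \in \F_q}\psi(y) = 0$, forcing $\sum_{y \in \F_q}\psi(y) = 0$. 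Alternatively, one may compute directly: grouping the elements of $\F_q$ by the value of their trace, each fiber of $\Tr$ has exactly $q/p$ elements, so $\sum_{y \in \F_q}\psi(y) = \tfrac{q}{p}\sum_{t \in \F_p}\zeta_p^{\,t} = \tfrac{q}{p}\cdot 0 = 0$, the inner sum being a full sum of $p$-th roots of unity (a vanishing geometric series, exactly as in Lemma \ref{lema eta_d}).

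There is no real obstacle here; this is a routine orthogonality computation. The only point deserving a word of care is the nontriviality of $\psi$ (equivalently, the surjectivity of the trace), which is what makes the $x \ne 0$ case collapse to zero; I would state it explicitly rather than leave it implicit.
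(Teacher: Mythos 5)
Your proof is correct and follows the same approach as the paper: the $x=0$ case is handled identically, and for $x\ne 0$ the paper simply cites the orthogonality relation (Theorem 5.4 of Lidl--Niederreiter), which is exactly the fact you prove from scratch via the shift argument and the surjectivity of the trace. No gaps; you have just made explicit what the paper delegates to a reference.
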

\begin{proof}
    When $x = 0$, we have $\psi(c x) = 1$ for every $c \in \F_q$, and the result follows. When $x \ne 0$, it is a direct consequence of Theorem 5.4 in \cite{lidl finite fields}.
\end{proof}
As a consequence of this result, character sums of this form can be used as indicator functions to find roots of polynomials.
Let $f$ be a polynomial of the form (\ref{f generico}), using Lemma \ref{lema base contagem}, we have that the number of roots over $(\F_q^*)^n$ is 
\begin{equation}\label{usando caracteres generico}
\begin{split}
    N^*(f) &= \sum_{x_1, \dots, x_n \in \F_q^*} \frac{1}{q} \sum_{c \in \F_q} \psi(c f(X))\\
    &= \frac{1}{q} \sum_{c \in \F_q} \sum_{x_1, \dots, x_n \in \F_q^*} \psi(c f(X)).
\end{split}
\end{equation}
Let us define
\begin{equation}\label{def S(u, d)}
    S(u, d) = \sum_{x \in \F_q^*} \psi\left(u x^{d}\right).
\end{equation}
If we require the polynomial to be diagonal, we can rewrite the expression in \eqref{usando caracteres generico} in terms of sums of the form \eqref{def S(u, d)}:
\begin{lemma}\label{lema da contagem}
Let $f$ be a diagonal polynomial of the form
$$f(X) = a_1 x_1^{d_1} + \cdots + a_s x_1^{d_s} - b,$$ 
where $a_1, \dots, a_s \in \F_q^*$ and $b \in \F_q$. Then
\begin{equation}\label{N*(g) com S(c a_i, d_i) generico}
    N^*(f) = \frac{(q - 1)^s}{q} + \frac{1}{q}\sum_{c \in \F_q^*} \overline{\psi}(cb) \prod_{j = 1}^{s} S(ca_j, d_j).
\end{equation}
\end{lemma}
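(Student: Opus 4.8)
The plan is to begin from the character-sum expression for $N^*(f)$ already recorded in \eqref{usando caracteres generico}; specializing the number of variables to $s$ (each variable occurring in exactly one monomial of the diagonal polynomial), it reads
\begin{equation*}
    N^*(f) = \frac{1}{q}\sum_{c\in\F_q}\ \sum_{x_1,\dots,x_s\in\F_q^*}\psi\!\left(cf(X)\right).
\end{equation*}
I would then split the outer sum according to whether $c=0$ or $c\ne 0$. For $c=0$ we have $\psi(0)=1$, so the inner sum collapses to $\sum_{x_1,\dots,x_s\in\F_q^*}1=(q-1)^s$, which contributes the first summand $(q-1)^s/q$ of the claimed formula.

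For each $c\ne 0$, the idea is to exploit that $\psi$ is an additive character in order to factor the summand across the terms of $f$. Writing $f(X)=-b+\sum_{j=1}^s a_j x_j^{d_j}$, one gets
\begin{equation*}
    \psi\!\left(cf(X)\right)=\psi(-cb)\prod_{j=1}^s\psi\!\left(ca_j x_j^{d_j}\right)=\overline{\psi}(cb)\prod_{j=1}^s\psi\!\left(ca_j x_j^{d_j}\right),
\end{equation*}
where the last equality uses $\psi(-y)=\overline{\psi(y)}$. Since the variables now appear in separate factors, the sum over $(\F_q^*)^s$ distributes over the product, and by the definition \eqref{def S(u, d)} of $S(u,d)$ we obtain
\begin{equation*}
    \sum_{x_1,\dots,x_s\in\F_q^*}\ \prod_{j=1}^s\psi\!\left(ca_j x_j^{d_j}\right)=\prod_{j=1}^s\sum_{x_j\in\F_q^*}\psi\!\left(ca_j x_j^{d_j}\right)=\prod_{j=1}^s S(ca_j,d_j).
\end{equation*}
Combining the contribution of $c=0$ with the sum over $c\in\F_q^*$ of $\overline{\psi}(cb)\prod_{j=1}^s S(ca_j,d_j)$, all divided by $q$, yields exactly \eqref{N*(g) com S(c a_i, d_i) generico}.

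This is essentially a bookkeeping argument, so I do not anticipate a genuine obstacle. The only points that need a little care are: matching the number of variables $n$ to the number of terms $s$, so that the monomials involve pairwise distinct variables and the product genuinely factors; correctly extracting the constant $-b$ as the scalar $\psi(-cb)=\overline{\psi}(cb)$; and justifying the interchange of sum and product, which is legitimate precisely because the $j$-th factor depends only on $x_j$.
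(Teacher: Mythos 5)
Your proof is correct and follows essentially the same route as the paper: the paper's own (very terse) argument also starts from \eqref{usando caracteres generico}, separates the $c=0$ term to get $(q-1)^s/q$, and uses additivity of $\psi$ to factor the inner sum into $\overline{\psi}(cb)\prod_{j}S(ca_j,d_j)$. Your version just makes the bookkeeping explicit.
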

\begin{proof}
    The expression in (\ref{usando caracteres generico}) simplifies to
\begin{align*}
    N^*(f) &= \frac{1}{q} \sum_{c \in \F_q} \sum_{x_1, \dots, x_s \in \F_q^*} \psi\left(c \left(- b + \sum_{j = 1}^s a_j x_j^{d_j}\right)\right)\\
    &= \frac{(q - 1)^s}{q} + \frac{1}{q}\sum_{c \in \F_q^*} \overline{\psi}(cb) \prod_{j = 1}^{n}\sum_{x \in \F_q^*} \psi\left(c a_j x^{d_j}\right).
\end{align*}
\end{proof}

Lemma \ref{lema da contagem} can be used to explicitly compute the number of $*$-roots of a polynomial, provided the Gaussian sums can be computed. In the main results section this Lemma will be used on a family of diagonal polynomials that satisfies specific arithmetic properties.
\newline
\hfill

If we use multiplicative characters instead of additive characters, a similar result for counting $*$-roots in terms of multiplicative characters and Gauss sums can be derived. Let $\omega$ be a generator of the multiplicative characters group, i.e., $\widehat{\F_q^*} = \{\omega^k: k = 0, 1, \dots, q - 2\}$. 
The following result allows us to express $N^*(f)$ in terms of $\omega$ and the Gauss sums.

\begin{lemma}\label{lema do caractere}
Let $f$ be a polynomial of the form (\ref{f generico}), then
$$N^*(f) = \frac{(q - 1)^n}{q} + \frac{(q - 1)^{n+1-s}}{q}\sum \prod_{j=1}^s \omega(a_j)^{v_j} G(\omega^{-v_j}),$$
where the sum is taken over all vectors $v = (v_1, \dots, v_s)$ with $0 \le v_j \le q - 2$ for $j = 1, \dots, s$ such that $\tilde{D}_fv^T \equiv 0 \pmod{q - 1}$.
\end{lemma}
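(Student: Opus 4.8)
The plan is to start from the additive-character indicator identity in \eqref{usando caracteres generico}, but to expand each monomial power $x_i^{e}$ using multiplicative characters rather than working directly with $\psi$. The key elementary fact is that for $x \in \F_q^*$ and $e \in \Z$, one has the orthogonality relation $\sum_{v=0}^{q-2}\omega^{v}(x^{e}) = (q-1)$ if $x^{e}=1$ (equivalently if the relevant divisibility holds) and $0$ otherwise; more usefully, writing $\psi$ against a power sum and using the standard Gauss-sum inversion $\sum_{x\in\F_q^*}\psi(cx^{d}) = \sum_{t=0}^{q-2}\overline{\omega}^{t}(c)\,\omega^{t}(\text{something})\cdot(\ldots)$. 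Concretely, I would first record the well-known expansion
\begin{equation*}
\sum_{x\in\F_q^*}\psi(y x) \;=\; \text{(handled via)}\quad \psi(y)=\frac{1}{q-1}\sum_{t=0}^{q-2} G(\overline{\omega}^{t})\,\omega^{t}(y)\quad (y\neq 0),
\end{equation*}
which follows from Gauss-sum orthogonality. This lets me replace $\psi(c\,a_j X^{D_j})$ by a sum over a character exponent $v_j$, pulling out $\omega^{v_j}(a_j)$, $\omega^{v_j}(c)$, and the monomial contribution $\omega^{v_j}(X^{D_j}) = \prod_i \omega^{v_j}(x_i)^{d_{i,j}}$.

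Next I would carry out the sums over the variables. Starting from $N^*(f) = \frac{1}{q}\sum_{c\in\F_q}\sum_{x_1,\dots,x_n\in\F_q^*}\psi(cf(X))$, the $c=0$ term contributes $\frac{(q-1)^n}{q}$, matching the leading term of the claimed formula. For $c \ne 0$, after substituting the character expansion of each $\psi(c a_j X^{D_j})$, I collect: a factor $\prod_{j=1}^{s}\omega^{v_j}(a_j)\,G(\overline{\omega}^{v_j})$ from the coefficients and Gauss sums (normalised by $(q-1)^{-s}$); a sum $\sum_{c\in\F_q^*}\omega^{\sum_j v_j}(c)$ over the constant; and for each variable $x_i$ a sum $\sum_{x_i\in\F_q^*}\omega(x_i)^{\sum_j d_{i,j}v_j}$. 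By Lemma~\ref{lema eta_d}-type orthogonality (character orthogonality on $\F_q^*$), the $c$-sum is $(q-1)$ if $\sum_j v_j\equiv 0\pmod{q-1}$ and $0$ otherwise, and the $x_i$-sum is $(q-1)$ if $\sum_j d_{i,j}v_j\equiv 0\pmod{q-1}$ and $0$ otherwise. The $n+1$ congruences $\sum_j v_j\equiv 0$ and $\sum_j d_{i,j}v_j\equiv 0$ ($1\le i\le n$) are exactly the rows of the system $\tilde D_f v^T\equiv 0\pmod{q-1}$, since the top row of $\tilde D_f$ is all $1$'s. Each surviving vector $v$ then contributes $(q-1)^{n+1}$ from the $c$-sum and the $n$ variable sums, and the prefactor $\frac{1}{q}(q-1)^{-s}$ turns this into $\frac{(q-1)^{n+1-s}}{q}$, giving the stated identity after replacing $G(\overline{\omega}^{v_j}) = G(\omega^{-v_j})$.

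The one genuinely delicate point is bookkeeping the $v_j=0$ cases: when some $v_j=0$ the character $\omega^{v_j}$ is trivial and the "expansion" $\psi(c a_j X^{D_j})$ must be handled so that the trivial-character Gauss sum $G(\omega^0)$ is interpreted consistently (the standard convention makes $G(\omega^0)=-1$, or one isolates these terms and checks the combinatorics still matches). I would verify that with the conventions fixed in the Preliminaries—where $\eta_0$ is the all-ones character and Gauss sums are defined by \eqref{definicao soma de gauss}—the identity $\psi(y)=\frac{1}{q-1}\sum_{t}G(\overline{\omega}^t)\omega^t(y)$ holds for all $y\ne 0$ including the contribution of $t=0$, so no separate case analysis leaks into the final sum. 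This is the main obstacle; once the degenerate characters are correctly accounted for, the rest is orthogonality and collecting powers of $q-1$.
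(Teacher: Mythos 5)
Your argument is correct and complete. Note that the paper does not actually prove this lemma in-house---its ``proof'' is a citation to Lemma 2.5 of \cite{wei cao}---so your self-contained derivation is a genuinely different route from what appears in the text, and a more informative one. The engine of your proof is the inversion formula $\psi(y) = \frac{1}{q-1}\sum_{t=0}^{q-2}G(\overline{\omega}^{t})\,\omega^{t}(y)$ for $y \ne 0$, which is the one nontrivial input: it is verified in one line by expanding $G(\overline{\omega}^{t}) = \sum_{c\in\F_q^*}\psi(c)\overline{\omega}^{t}(c)$ and applying orthogonality of multiplicative characters, and it holds including the $t=0$ term under the paper's conventions ($\eta_0 \equiv 1$ on all of $\F_q$, so $G(\omega^0)=\sum_{c\in\F_q^*}\psi(c)=-1$). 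Since $c\ne 0$ and $x_i\ne 0$ for all $i$ force each argument $c a_j X^{D_j}$ to be nonzero, the formula legitimately applies to every factor of $\psi(cf(X))=\prod_{j}\psi(c a_j X^{D_j})$, and your bookkeeping is right: the sum over $c$ and the $n$ sums over the $x_i$ each contribute $q-1$ precisely when the corresponding row of $\tilde{D}_f v^{T}\equiv 0 \pmod{q-1}$ is satisfied (the $c$-sum matching the all-ones top row of the augmented degree matrix), so each surviving $v$ picks up $(q-1)^{n+1}/(q-1)^{s}=(q-1)^{n+1-s}$, and $G(\overline{\omega}^{v_j})=G(\omega^{-v_j})$ gives the stated form. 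The ``delicate point'' you flag about $v_j=0$ is indeed dissolved by the conventions fixed in the Preliminaries, so no separate case analysis is needed; the only improvement I would ask for is to write out the one-line verification of the inversion formula rather than gesturing at it.
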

\begin{proof}
See Lemma 2.5 in \cite{wei cao}.
\end{proof}

This lemma is not ideal for explicit computations. Since the sum is computed over many vectors that are solutions of a linear system over a ring, it cannot be simplified further unless the set of solutions is known and constrained in some way. However, it has a significant property: if the terms in the expression evaluate identically for two different polynomials, it follows that the two polynomials must have the same number of $*$-roots. To formalize this concept, we will introduce the following definition:

\begin{definition}
    Two polynomials $f = \sum_{j=1}^m a_j X^{D_j}$ and $g = \sum_{j =1 }^m a_j X^{D_{j}^\prime}$ are said to be $*$-equivalent if they have the same coefficient vector $(a_1, \dots, a_m)$ and the congruences $\tilde{D}_f v^T \equiv 0 \pmod{q - 1}$ and $\tilde{D}_g v^T \equiv 0 \pmod{q - 1}$ have the same set of solutions.
\end{definition}

It follows from Lemma \ref{lema do caractere} that if $f$ and $g$ are $*$-equivalent polynomials, then $N^*(f) = N^*(g)$, i.e., they have the same number of $*$-roots.

Determining if two polynomials $f$ and $g$ are $*$-equivalent can be challenging, as it requires verifying whether the linear systems $\tilde{D}_f v^T = 0$ and $\tilde{D}_g v^T = 0$ have the same set of solutions. It can be verified that two matrices $D$ and $E$ with coefficients in $\Z_{q-1}$ satisfying $D v^T \equiv 0 \pmod{q - 1}$ and $E v^T \equiv 0 \pmod{q - 1}$ have the same set of solutions if there exists an invertible matrix $M$ over $\Z_{q-1}$ such that $M D = E$. In this case, $D$ and $E$ are said to be row-equivalent. 

Hence, if two polynomials $f$ and $g$ have the same coefficient vector and there is an invertible matrix $M$ over $\Z_{q-1}$ such that $M \tilde{D}_f = \tilde{D}_g$, then $f$ and $g$ are $*$-equivalent. This condition is sufficient but not strictly necessary, since row-equivalence over $\Z_{q-1}$ is not the only way for the systems $D v^T \equiv 0 \pmod{q - 1}$ and $E v^T \equiv 0 \pmod{q - 1}$ to have the same solution set.

In particular, the elementary row operations can be represented as multiplication by invertible matrices. Thus, if we can apply a sequence of elementary row operations to transform the matrix $\tilde{D}_f$ into $\tilde{D}_g$, then the corresponding linear systems have the same set of solutions. This provides a sufficient criterion for establishing $*$-equivalency. The elementary row operations are:
\begin{enumerate}[$(i)$]
    \item swapping two rows;

    \item adding a multiple of a row to another;

    \item multiplying a row by an element in $\Z_{q-1}^*$;
\end{enumerate}
which can be represented by multiplying by invertible matrices.

To address possible misconceptions, we remark that even though $N^*(f) = N^*(g)$ for two $*$-equivalent polynomials $f$ and $g$, this does not mean they have the same set of $*$-roots. For instance, the polynomials $f(x,y) = x^2y^3 + x y^2 $ and $g(x,y) = xy + x^3 y^2$ in $\F_5[x,y]$ are $*$-equivalent, but it can be verified that $(2,2)$ is a root of $f$ but not a root of $g$.

We also remark that two polynomials being $*$-equivalent does not guarantee that they have the same number of roots in $\F_q^n$. For instance, the polynomials $f(x,y,z) = 11x^{13} + 5 x^{21} y^{19} + 12 x^{2}y^{3}z^{17}$ and $g(x,y,z) = 11 x + 5 y + 12 z$  are $*$-equivalent in $\F_{31}[x,y,z]$, thus $N^*(f) = 870 = N^*(g)$. However, it can be verified that $N(f) = 1861 \ne 961 = N(g)$.

\section{Main results}

In this section we will present two results. First, we will determine the number of $*$-roots for a family of diagonal polynomials. Then, we will determine the number of roots of full polynomials when they are $*$-equivalent to said diagonal polynomials.

Lemma \ref{lema do caractere} allows us to relate the number of $*$-roots of different polynomials. However, to utilize this result, we need a class of polynomials for which we can explicitly compute the number of $*$-roots. As mentioned previously, this is already established for linear and quadratic diagonal polynomials. In what follows, we will determine the number of $*$-roots for a different family of diagonal polynomials.

From this point onward, the results will rely on computing pure Gauss sums, so we will always assume that $q$ is a square. Lemma \ref{lema da contagem} suggests that being able to compute $S(c a_j, d_j)$ for $1 \le j \le s$, where $S(u, d)$ is as defined in \eqref{def S(u, d)}, would allow us to count the number of $*$-roots of a diagonal polynomial. The following result addresses this calculation when the exponents are $(p, r)$-admissible for a suitable positive integer $r$.
\begin{lemma}\label{Lema S(c a, d)}
    Let $u$ be an element in $\F_q^*$, $d \ge 3$ a $(p, r)$-admissible positive integer and $h = m/(2r)$. If $p$ is odd, we have
    $$S(u, d) = \begin{cases}
        - 1 - (-1)^{h} (d - 1) q^{1/2}, & \text{ if } \eta_d(u) = (-1)^{h(p^r+1)/d},\\
        - 1 + (-1)^{h} q^{1/2},& \text{ if } \eta_d(u) \ne (-1)^{h(p^r+1)/d}.
    \end{cases}$$
    If $p = 2$, then
    $$S(u, d) = \begin{cases}
        - 1 - (-1)^{h} (d - 1) q^{1/2}, & \text{ if } \eta_d(u) = 1,\\
        - 1 + (-1)^{h} q^{1/2},& \text{ if } \eta_d(u) \ne 1.
    \end{cases}$$
\end{lemma}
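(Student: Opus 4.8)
The plan is to expand $S(u,d) = \sum_{x \in \F_q^*} \psi(u x^d)$ by using the characters of order $d$ to detect $d$-th powers, and then invoke the pure Gauss sum evaluation from Lemma \ref{Lema da soma pura}. First I would write, for each $y \in \F_q^*$, the indicator that $y$ is a $d$-th power in terms of the multiplicative characters $\eta_d^j$: by Lemma \ref{lema eta_d}, the number of $x \in \F_q^*$ with $x^d = y$ equals $\sum_{j=0}^{d-1}\eta_d^j(y)$ (using that $d \mid q-1$, so the image of $x \mapsto x^d$ is exactly the group of $d$-th powers, each hit $d$ times). Hence
\begin{equation*}
    S(u,d) = \sum_{y \in \F_q^*}\psi(uy)\sum_{j=0}^{d-1}\eta_d^j(y) = \sum_{j=0}^{d-1}\sum_{y \in \F_q^*}\psi(uy)\eta_d^j(y).
\end{equation*}
The $j=0$ term is $\sum_{y\in\F_q^*}\psi(uy) = -1$. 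For $1 \le j \le d-1$, I would substitute $y = u^{-1}z$ (a bijection of $\F_q^*$) to get $\sum_{z\in\F_q^*}\psi(z)\eta_d^j(u^{-1}z) = \overline{\eta_d^j}(u)\,G(\eta_d^j) = \eta_d^{-j}(u)\,G(\eta_d^j)$, using the definition \eqref{definicao soma de gauss} of the Gauss sum. So $S(u,d) = -1 + \sum_{j=1}^{d-1}\eta_d^{-j}(u)\,G(\eta_d^j)$.

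Next I would plug in the value of the pure Gauss sum from Lemma \ref{Lema da soma pura}. Since $d$ is $(p,r)$-admissible, $G(\eta_d^j) = q^{1/2}\varepsilon_j$ where $\varepsilon_j \in \{\pm 1\}$ is given by the two cases of that lemma. The cleanest route is to treat the two regimes separately. In the case $2 \nmid d$ or $2 \mid (p^r+1)/d$, the sign is $\varepsilon_j = (-1)^{h+1}$ independent of $j$, so $S(u,d) = -1 + (-1)^{h+1}q^{1/2}\sum_{j=1}^{d-1}\eta_d^{-j}(u)$. By Lemma \ref{lema eta_d} applied to $u$ (note $\eta_d(u) = 1$ iff $\eta_d^{-1}(u)=1$, and $\sum_{j=0}^{d-1}\eta_d^{-j}(u)$ is the same sum reindexed), the sum $\sum_{j=1}^{d-1}\eta_d^{-j}(u)$ equals $d-1$ if $\eta_d(u)=1$ and $-1$ otherwise; this yields the two stated values $-1-(-1)^h(d-1)q^{1/2}$ and $-1+(-1)^h q^{1/2}$. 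In the case $2 \mid d$ and $2 \nmid (p^r+1)/d$, the sign is $\varepsilon_j = (-1)^{jh+h+1} = (-1)^{h+1}(-1)^{jh}$, so $S(u,d) = -1 + (-1)^{h+1}q^{1/2}\sum_{j=1}^{d-1}(-1)^{jh}\eta_d^{-j}(u)$. Here I would absorb $(-1)^{jh} = \zeta_d^{jh(q-1)/2}$... more cleanly: write $(-1)^{jh}\eta_d^{-j}(u) = \eta_d^{-j}(u)\cdot(-1)^{jh}$ and observe that $(-1)^{jh} = \eta_d^j(\beta)^{?}$ — actually the slick move is to note $(-1)^{jh}$ is the value at $j$ of a character: $(-1)^{jh} = \eta_2(\alpha)^{jh}$ where $\eta_2(\alpha) = -1$, and since $\eta_d$ has even order $d$, $\eta_d^{d/2}$ is the quadratic character $\eta_2$; thus $(-1)^{jh}\eta_d^{-j}(u) = \eta_d^{-j}(u \cdot w)$ for a suitable $w$ with $\eta_d(w) = (-1)^{-h} = (-1)^h$, and one checks $w$ can be taken so that $\eta_d(w) = (-1)^{h}$, i.e. the condition $\eta_d(u)=1$ becomes $\eta_d(u) = (-1)^{h(p^r+1)/d}$ after this twist (using $2\nmid (p^r+1)/d$ so $(-1)^{h(p^r+1)/d} = (-1)^h$). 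Then Lemma \ref{lema eta_d} again gives the dichotomy, producing exactly the first branch of the $p$ odd case.

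The main obstacle I anticipate is bookkeeping the sign twist in the even-$d$ case: one must verify that the combination $(-1)^{jh}\eta_d^{-j}(u)$ is genuinely of the form $\widetilde{\eta}^{\,j}(u')$ for a single character $\widetilde\eta$ of order dividing $d$ and that the resulting condition "$\widetilde\eta$-value is trivial" translates into $\eta_d(u) = (-1)^{h(p^r+1)/d}$ as claimed, and not into some other congruence condition; getting the exponent $(p^r+1)/d$ to appear (rather than, say, just $h$) requires care, and this is presumably where the hypothesis $2 \nmid (p^r+1)/d$ in this branch is used. For $p = 2$ the field has characteristic $2$ so $-1 = 1$, all the sign twists collapse, $d$ being even is impossible only if... in fact $d \mid p^r+1$ with $p=2$ still allows even $d$, but $(-1)^{jh} = 1$ regardless, so only the uniform-sign computation survives and one directly gets the stated $p=2$ formula with condition $\eta_d(u) = 1$; I would handle this as a quick corollary of the first computation. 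Finally I should double-check the edge consistency: the hypothesis $d \ge 3$ ensures $d - 1 \ge 2$ so the two cases in the conclusion are genuinely distinct, and $h = m/(2r)$ is a positive integer because $2r \mid m$ by admissibility.
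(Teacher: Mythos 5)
Your argument follows the same route as the paper's proof: orthogonality of the characters $\eta_d^j$ to expand $S(u,d)$ into a combination of Gauss sums, Lemma \ref{Lema da soma pura} to evaluate them as pure sums, and the geometric-sum dichotomy of Lemma \ref{lema eta_d} to collapse the result into the two stated values. Two points deserve attention. First, in the uniform-sign regime you arrive at the condition $\eta_d(u)=1$, whereas the statement reads $\eta_d(u)=(-1)^{h(p^r+1)/d}$; you still owe the observation (made explicitly in the paper as equation \eqref{casos que juntam os casos T(ca, d)}) that the hypothesis $2\nmid d$ or $2\mid(p^r+1)/d$ forces $(-1)^{h(p^r+1)/d}=1$ — when $p$ and $d$ are both odd, $2\mid p^r+1$ and hence $(p^r+1)/d$ is even. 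Second, your treatment of $p=2$ is off: since $d\mid 2^r+1$ and $2^r+1$ is odd, $d$ is necessarily odd, contrary to your remark that even $d$ is still allowed; it is this parity fact, and not any collapse of signs in characteristic $2$ (the signs $(-1)^{jh}$ live in $\mathbb{C}$, not in $\F_q$), that places you in the uniform-sign branch of Lemma \ref{Lema da soma pura} and yields the condition $\eta_d(u)=1$. With that correction the $p=2$ case goes through. Finally, your character-twist handling of the case $2\mid d$, $2\nmid(p^r+1)/d$ is workable but more elaborate than needed: the paper simply recognizes $\sum_{j=1}^{d-1}\bigl((-1)^h\overline{\eta_d}(u)\bigr)^j$ as a geometric sum over a $d$-th root of unity, which gives $d-1$ or $-1$ according to whether $\eta_d(u)=(-1)^h$.
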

\begin{proof}
We use the Lemma \ref{lema eta_d} to rewrite the sum:
    \begin{align*}
        S(u, d) &= \sum_{x \in \F_q^*} \psi\left(u x^{d}\right) \\
        &=  \sum_{y \in \F_q^*} \psi(u y) \sum_{j = 0}^{d - 1} \eta_d^j(y)\\
        &=  \sum_{y \in \F_q^*} \psi(u y) \sum_{j = 0}^{d - 1} \eta_d^j(y) \cdot \eta_d^j(u) \overline{\eta_d}^j(u)\\
        &=  \sum_{j = 0}^{d - 1} \overline{\eta_d}^j(u) \sum_{y \in \F_q^*} \psi(u y) \eta_d^j(u y)\\
        &=  \sum_{j = 0}^{d - 1} \overline{\eta_d}^j(u) \sum_{y \in \F_q^*} \psi(y) \eta_d^j(y)\\
        &= -1 + \sum_{j = 1}^{d - 1} \overline{\eta_d}^j(u) G(\eta_d^j).
    \end{align*}
    Our assumption that $d$ is $(p, r)$-admissible guarantees that the Gauss sums are pure. Let us define
    \begin{equation}\label{definicao de delta}
        \Delta(d, j) = \begin{cases}
        (-1)^{jh}, &\text{ if }2 \mid d \text{ and } 2\nmid (p^r + 1)/d,\\
        1, & \text{ if }2 \nmid d \text{ or } 2\mid (p^r + 1)/d.\\
    \end{cases}
    \end{equation}
    Using Lemma \ref{Lema da soma pura} and Equation (\ref{definicao de delta}), we obtain
    \begin{equation}\label{S(ca, d) generico}
    \begin{split}
        S(u, d) &= -1  + \sum_{j = 0}^{d - 1} \overline{\eta_d}^j(u) (-1)^{h + 1} q^{1/2} \Delta(d, j)\\
        &= -1 + (-1)^{h + 1} q^{1/2} T(u,d),
    \end{split}
    \end{equation}
    where $T(u, d) = \sum_{j = 1}^{d - 1} \overline{\eta_d}^j(u) \Delta(d, j)$. 
    
    If $2 \mid d$ and $2 \nmid (p^r + 1)/d$, then $\Delta(d, j) = (-1)^{jh}$. Thus, in this case we have
    \begin{equation}\label{T(ca,d) 2 mid d}
        T(u, d) = \sum_{j=1}^{d-1} ((-1)^h \overline{\eta_d}(u))^j = \begin{cases}
        d - 1, &\text{ if } \eta_d(u) = (-1)^{h},\\
        -1, & \text{ if } \eta_d(u) \ne (-1)^{h}.
    \end{cases}
    \end{equation}
    Similarly, if $2 \nmid d$ or $2 \mid (p^r + 1)/d$ then $\Delta(d, j) = 1$, in which case
    \begin{equation}\label{T(ca,d) 2 nmid d}
    T(u, d) = \sum_{j=1}^{d-1} (\overline{\eta_d}(u))^j = \begin{cases}
        d - 1, &\text{ if } \eta_d(u) = 1,\\
        -1, & \text{ if } \eta_d(u) \ne 1.
    \end{cases}
    \end{equation}
    We will now verify that 
    \begin{equation}\label{casos que juntam os casos T(ca, d)}
        (-1)^{h(p^r + 1)/d} = \begin{cases}
        (-1)^h, & \text{ if } 2 \mid d \text{ and } 2\nmid (p^r + 1)/d,\\
     1, & \text{ if }2 \nmid d \text{ or } 2\mid (p^r + 1)/d.\\
    \end{cases}
    \end{equation}
    For the first case, we notice that the condition $2 \nmid (p^r + 1)/d$ implies $(-1)^{h (p^r + 1)/d} = (-1)^{h}$. Analogously, if $2 \mid (p^r + 1)/d$, then $(-1)^{h (p^r + 1)/d} = 1$. The condition $2 \nmid d$ on the second case also implies $(-1)^{h (p^r + 1)/d} = 1$, since $p$ being odd implies $2 \mid p^r + 1$ in that case. Combining \eqref{T(ca,d) 2 mid d}, \eqref{T(ca,d) 2 nmid d}, and \eqref{casos que juntam os casos T(ca, d)}, we have
    \begin{equation*}
        T(u, d) = \begin{cases}
        d - 1, &\text{ if } \eta_d(u) = (-1)^{h(p^r + 1)/d},\\
        -1, & \text{ if } \eta_d(u) \ne (-1)^{h(p^r + 1)/d},
    \end{cases}
    \end{equation*}
    This equation, when applied to (\ref{S(ca, d) generico}), gives us the desired formula for $S(u, d)$.
    

    Analogously, when $p = 2$ we have $d \mid p^r + 1$, and consequently $2 \nmid d$. Thus $T(u, d)$ is as given in (\ref{T(ca,d) 2 nmid d}), which when applied to (\ref{S(ca, d) generico}) gives us the desired result.
    
\end{proof}

For convenience, let us introduce the following definitions:
\begin{definition}\label{definicao A e B}
    Let $d\ge 3$ be a $(p, r)$-admissible integer and  $h = m/(2r)$. We define the functions $C_1$ and $C_2$ as
    \begin{align*}
    C_1(d) &= - 1 - (-1)^{h} (d - 1) q^{1/2},\\
    C_2(d) &= - 1 + (-1)^{h} q^{1/2}.
\end{align*}
\end{definition}
We remark that since $q$ is fixed and $r$ and $h$ are defined in terms of $d$, only $d$ is needed to compute these values.

\hfill
\newline

Applying Lemma \ref{Lema S(c a, d)} to Equation (\ref{N*(g) com S(c a_i, d_i) generico}) allows us to compute the number of $*$-roots for diagonal polynomials satisfying that special condition.
From now on, let us consider the family of diagonal polynomials $g \in \F_q[x_1, \dots, x_s]$ of the form
\begin{equation}\label{g todos ds iguais}
    g(X) = a_1 x_1^d + \cdots + a_s x_s^d - b,
\end{equation}
where $\eta_d(a_1) = \dots = \eta_d(a_s)$ for some integer $d \ge 3$ that is a $(p,r)$-admissible integer for a suitable positive integer $r$.

\begin{theorem}\label{teorema diagonal caso b = 0}
    Let $g \in \F_q[x_1, \dots, x_s]$ be a polynomial of the form (\ref{g todos ds iguais}) where $b = 0$. Then
    $$N^*(g)  = \frac{1}{q} \left((q - 1)^{s}+ \frac{q-1}{d} C_1(d)^s + \frac{(q - 1)(d - 1)}{d} C_2(d)^s\right).$$
\end{theorem}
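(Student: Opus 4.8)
The plan is to start from the formula in Lemma~\ref{lema da contagem} specialized to the case $b = 0$, where $\overline{\psi}(cb) = \overline{\psi}(0) = 1$ for all $c \in \F_q^*$, so that
\[
N^*(g) = \frac{(q-1)^s}{q} + \frac{1}{q} \sum_{c \in \F_q^*} \prod_{j=1}^{s} S(c a_j, d).
\]
Next I would apply Lemma~\ref{Lema S(c a, d)} to evaluate each factor $S(c a_j, d)$. The key observation is that $S(c a_j, d)$ equals $C_1(d)$ or $C_2(d)$ according to whether $\eta_d(c a_j)$ equals the distinguished value (call it $\xi$, being $(-1)^{h(p^r+1)/d}$ when $p$ is odd, and $1$ when $p = 2$) or not. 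Since $\eta_d$ is multiplicative, $\eta_d(c a_j) = \eta_d(c)\eta_d(a_j)$, and by hypothesis $\eta_d(a_1) = \dots = \eta_d(a_s)$; denote this common value $\beta$. Hence for a fixed $c$, \emph{all} $s$ factors are simultaneously $C_1(d)$ (when $\eta_d(c)\beta = \xi$) or simultaneously $C_2(d)$ (otherwise), so the product $\prod_j S(ca_j, d)$ is either $C_1(d)^s$ or $C_2(d)^s$.

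The remaining step is a counting argument: determine how many $c \in \F_q^*$ satisfy $\eta_d(c) = \xi \beta^{-1}$. Since $\eta_d$ is a character of order $d$ on $\F_q^*$, its image is the group of $d$-th roots of unity, each fiber $\{c : \eta_d(c) = \gamma\}$ has size $(q-1)/d$ for any $\gamma$ in the image, and $\xi \beta^{-1}$ does lie in the image because $\xi$ is a power of $-1$ and $d$ is even in the relevant subcase (when $d$ is odd or $p=2$, $\xi = 1$ is certainly a $d$-th root of unity, and $\beta$ is one by definition). Therefore exactly $(q-1)/d$ values of $c$ contribute $C_1(d)^s$, and the remaining $(q-1) - (q-1)/d = (q-1)(d-1)/d$ values contribute $C_2(d)^s$. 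Substituting gives
\[
N^*(g) = \frac{(q-1)^s}{q} + \frac{1}{q}\left( \frac{q-1}{d} C_1(d)^s + \frac{(q-1)(d-1)}{d} C_2(d)^s \right),
\]
which is the claimed identity after factoring out $1/q$.

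I do not anticipate a serious obstacle here; the argument is essentially bookkeeping once Lemmas~\ref{lema da contagem} and~\ref{Lema S(c a, d)} are in hand. The one point requiring a little care is confirming that the distinguished value $\xi$ genuinely lies in the image of $\eta_d$ so that the fiber $\{c : \eta_d(c) = \xi\beta^{-1}\}$ is nonempty and has the full size $(q-1)/d$ --- this needs the observation that in the subcase where $\xi = (-1)^{h(p^r+1)/d} \neq 1$ one necessarily has $2 \mid d$, which is exactly the content of Equation~\eqref{casos que juntam os casos T(ca, d)} established in the proof of Lemma~\ref{Lema S(c a, d)}. With that in place, the parity of $\xi$ matches the requirement that $\xi$ be a $d$-th root of unity, and the uniform fiber-size fact for characters finishes the count.
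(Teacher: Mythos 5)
Your proposal is correct and follows essentially the same route as the paper: specialize Lemma~\ref{lema da contagem} to $b=0$, use $\eta_d(a_1)=\cdots=\eta_d(a_s)$ to collapse the product to $S(ca_1,d)^s$, and count the $(q-1)/d$ values of $c$ giving $C_1(d)$ versus the rest giving $C_2(d)$. The only difference is cosmetic: you treat all parity cases uniformly via the distinguished value $\xi$ and justify the fiber size explicitly, whereas the paper works out the case $(-1)^{h(p^r+1)/d}=1$ and declares the others analogous.
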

\begin{proof}
    From (\ref{N*(g) com S(c a_i, d_i) generico}) we have
    $$N^*(g) = \frac{(q - 1)^s}{q} + \frac{1}{q}\sum_{c \in \F_q^*} \prod_{i = 1}^{s} S(ca_i, d).$$
    Let us suppose $p$ is odd and $(-1)^{h(p^r + 1)/d} = 1$ so the condition we need to check to determine the value of $S(c a_i, d)$ is whether $\eta_d(c a_i)$ equals $1$.
    Since $\eta_d(a_1) = \cdots = \eta_d(a_s)$, for each $c \in \F_q^*$ the value of $\eta_d(c a_i)$ is the same for every $1 \le i \le s$. This gives us
    $$\sum_{c \in \F_q^*} \prod_{i = 1}^{s} S(ca_i, d) = \sum_{c \in \F_q^*} \left( S(c a_1, d) \right)^s.$$
    As we have seen in Lemma \ref{Lema S(c a, d)}, the value of $S(c a_1, d)$ will be $C_1(d)$ if $c a_1$ is a $d$-th power, and $C_2(d)$ if it isn't. Hence,
    $$\sum_{c \in \F_q^*} \left( S(c a_1, d) \right)^s = \frac{q-1}{d} C_1(d)^s + \frac{(q - 1)(d - 1)}{d} C_2(d)^s,$$
    which yields the result when applied in (\ref{N*(g) com S(c a_i, d_i) generico}). Other cases are analogous.
    
\end{proof}

For instance, the diagonal polynomial $g(x,y) = x^4 + y^4$ over $\F_{81}$ is such that the exponent $d = 4$ is a $(3, 2)$-admissible integer. We can compute $C_1(4) = -28$ and $C_2(4) = 8$, and use Theorem \ref{teorema diagonal caso b = 0} to conclude that the number of $*$-roots is
\begin{align*}
    N^*(g) &= \frac{1}{81}\left(80^2 + 20 (-28)^2 + 60 (8)^2 \right)\\
    &= 320.
\end{align*}

\begin{theorem}\label{teorema diagonal b ne 0}
Let $g \in \F_q[x_1, \dots, x_s]$ be a polynomial of the form (\ref{g todos ds iguais}) where $b \ne 0$. Then
$$N^*(g) = \begin{cases}
        \frac{1}{q}\left( (q - 1)^s  - C_2(d)^s + \frac{C_1(d)\left( C_1(d)^s - C_2(d)^s\right)}{d}\right), & \text{ if } \eta_d(a_1/b) = 1,\\
        \frac{1}{q}\left( (q - 1)^s  - C_2(d)^s + \frac{C_2(d)\left( C_1(d)^s - C_2(d)^s\right)}{d}\right), & \text{ if } \eta_d(a_1/b) \ne 1.\\
    \end{cases}$$
\end{theorem}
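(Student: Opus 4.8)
\textbf{Proof plan for Theorem \ref{teorema diagonal b ne 0}.}

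The plan is to start from the character-sum expression in Equation (\ref{N*(g) com S(c a_i, d_i) generico}), namely
$$N^*(g) = \frac{(q-1)^s}{q} + \frac{1}{q}\sum_{c \in \F_q^*} \overline{\psi}(cb) \prod_{j=1}^s S(ca_j, d),$$
and, exactly as in the proof of Theorem \ref{teorema diagonal caso b = 0}, use the hypothesis $\eta_d(a_1) = \cdots = \eta_d(a_s)$ to collapse the product into $\left(S(ca_1,d)\right)^s$. The new feature compared to the $b=0$ case is the twist by $\overline{\psi}(cb)$, so the sum no longer simplifies purely by counting how many $c$ land in each of the two classes; instead I must keep track of the additive character values. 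The cleanest way is to substitute $c = b^{-1} t$ (a bijection of $\F_q^*$), which turns $\overline{\psi}(cb)$ into $\overline{\psi}(t)$ and $S(ca_1,d)$ into $S(b^{-1}a_1 t, d)$, so that $N^*(g) = \frac{(q-1)^s}{q} + \frac{1}{q}\sum_{t \in \F_q^*}\overline{\psi}(t)\left(S(b^{-1}a_1 t, d)\right)^s$, and the relevant class of $t$ is governed by $\eta_d(b^{-1}a_1 t) = \eta_d(a_1/b)\eta_d(t)$.

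Next I would split $\F_q^*$ according to the value of $\eta_d(t)$, or more precisely according to whether $\eta_d(b^{-1}a_1 t)$ equals the critical value from Lemma \ref{Lema S(c a, d)} (which, after handling the parity bookkeeping as in that lemma, I may as well take to be $1$ up to the same case analysis); call the two classes $U_1$ (where $S = C_1(d)$) and $U_2$ (where $S = C_2(d)$). Then
$$\sum_{t\in\F_q^*}\overline{\psi}(t)\left(S(b^{-1}a_1 t,d)\right)^s = C_1(d)^s \sum_{t\in U_1}\overline{\psi}(t) + C_2(d)^s \sum_{t\in U_2}\overline{\psi}(t).$$
Since $\sum_{t\in U_1}\overline{\psi}(t) + \sum_{t\in U_2}\overline{\psi}(t) = \sum_{t\in\F_q^*}\overline{\psi}(t) = -1$, it suffices to evaluate a single one of these partial sums, say $\Sigma_1 := \sum_{t\in U_1}\overline{\psi}(t)$. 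The set $U_1$ is the coset of the subgroup of $d$-th powers determined by $\eta_d(a_1/b)$: if $\eta_d(a_1/b) = 1$ then $U_1$ is exactly the subgroup of $d$-th powers, otherwise it is a nontrivial coset. In either case, $\sum_{t\in U_1}\overline{\psi}(t)$ is a sum of $\overline{\psi}$ over a multiplicative coset, which can be written via Lemma \ref{lema eta_d} as $\frac{1}{d}\sum_{j=0}^{d-1}\eta_d^{-j}(a_1/b)\sum_{t\in\F_q^*}\overline{\psi}(t)\eta_d^j(t) = \frac{1}{d}\left(-1 + \sum_{j=1}^{d-1}\eta_d^{-j}(a_1/b)\overline{G(\eta_d^j)}\right)$, and the Gauss sums here are again pure by $(p,r)$-admissibility. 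Evaluating that inner sum with Lemma \ref{Lema da soma pura} (noting $\overline{G(\eta_d^j)}$ differs from $G(\eta_d^j)$ only by a sign that the $\Delta(d,j)$ bookkeeping already tracks, since the pure values are real up to sign) reproduces exactly the $T(u,d)$-type computation from the proof of Lemma \ref{Lema S(c a, d)}: one gets $\Sigma_1 = \frac{1}{d}(C_1(d) - C_2(d))\cdot(\text{indicator-type term}) + (\text{correction})$, and substituting back, together with $\Sigma_2 = -1 - \Sigma_1$, yields the two stated closed forms according to whether $\eta_d(a_1/b) = 1$ or not. Finally I would double-check the special-character case $p=2$ and the parity subcases of Lemma \ref{Lema S(c a, d)} collapse consistently, exactly as in that lemma's proof.

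The main obstacle I anticipate is the bookkeeping in evaluating $\Sigma_1$: one must correctly handle the conjugated Gauss sum $\overline{G(\eta_d^j)} = \eta_d^j(-1)G(\eta_d^j) = \eta_d^{-j}(-1)G(\eta_d^{-j})$ and verify that the resulting sign interacts with $\Delta(d,j)$ and with $\eta_d^{-j}(a_1/b)$ so as to give precisely the factor $C_1(d)$ in one branch and $C_2(d)$ in the other — in other words, showing that the coefficient of $\frac{1}{d}(C_1(d)^s - C_2(d)^s)$ in $N^*(g)$ is $C_1(d)$ exactly when $\eta_d(a_1/b)=1$. This is where a sign error would change the statement, so I would carry that reindexing out carefully rather than by analogy. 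Everything else is a routine recombination of the pieces already assembled, parallel to the proof of Theorem \ref{teorema diagonal caso b = 0}.
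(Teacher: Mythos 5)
Your plan is correct and follows essentially the same route as the paper: collapse the product to $S(ca_1,d)^s$ using $\eta_d(a_1)=\cdots=\eta_d(a_s)$, normalize the summation variable, split $\F_q^*$ into the two classes where $S$ equals $C_1(d)$ or $C_2(d)$, use $\sum_{t\in\F_q^*}\overline{\psi}(t)=-1$, and evaluate the remaining additive-character sum over a coset of the $d$-th powers. The only difference is that the paper avoids the Gauss-sum bookkeeping you flag as the main obstacle by observing that the coset sum is exactly $\frac{1}{d}\sum_{x\in\F_q^*}\psi(-ba_1^{-1}x^d)=\frac{1}{d}S(-b/a_1,d)$, so Lemma \ref{Lema S(c a, d)} applies directly, and the sign question reduces to the single fact that $\eta_d(-1)=1$ (since $(p,r)$-admissibility with $m$ even forces $2d\mid q-1$), which converts the condition $\eta_d(-b/a_1)=1$ into $\eta_d(a_1/b)=1$.
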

\begin{proof}
    In the equation (\ref{N*(g) com S(c a_i, d_i) generico}), we have

\[
N^*(g) = \frac{(q - 1)^s}{q} + \frac{1}{q}\sum_{c \in \F_q^*} \overline{\psi}(cb) \prod_{i = 1}^{s} S(ca_i, d).
\]

Since $\eta_d(a_1) = \cdots = \eta_d(a_s)$ implies $S(c a_1, d) = \cdots = S(c a_s, d)$, we can write

\begin{equation}\label{N^*(f) com S(c, d)}
\begin{split}
N^*(g) &= \frac{(q - 1)^s}{q} + \frac{1}{q} \sum_{c \in \F_q^*} \overline{\psi}(cb) S(ca_1, d)^s\\
&= \frac{(q - 1)^s}{q} + \frac{1}{q} \sum_{c \in \F_q^*}\overline{\psi}(c b a_1^{-1}) S(c, d)^s.
\end{split}
\end{equation}

We will assume that $p$ is odd and $(-1)^{h (p^r + 1)/d} = 1$. The cases where $p = 2$, or where $p$ is odd and $(-1)^{h (p^r + 1)/d} = -1$ are analogous. In this case, $S(c, d) = C_1(d)$ if $c$ is a $d$-th power and $C_2(d)$ otherwise. Since $\F_q^* = \{ \alpha^k: 1 \le k \le q - 1\}$, we can separate the sum $\sum_{c \in \F_q^*}\overline{\psi}(c b a_1^{-1}) S(c, d)^s$ according to whether $c$ is a $d$-th power of some element of $\F_q^*$:

    \begin{align*}
        \sum_{c \in \F_q^*}\overline{\psi}(c b a_1^{-1}) S(c, d)^s &= \sum_{l = 1}^{(q - 1)/d} \overline{\psi}(\alpha^{ld} b a_1^{-1}) C_1(d)^s + \left(\sum_{c \in \F_q^*} \overline{\psi}(c b a_1^{-1})   -  \sum_{l = 1}^{(q - 1)/d} \overline{\psi}(\alpha^{ld }b a_1^{-1})  \right) C_2(d)^s\\
        &= - C_2(d)^s + \left( C_1(d)^s - C_2(d)^s\right) \cdot \sum_{l = 1}^{(q - 1)/d} \overline{\psi}(\alpha^{ld}b a_1^{-1})\\
        &= - C_2(d)^s + \frac{\left( C_1(d)^s - C_2(d)^s\right)}{d} \cdot \sum_{x \in \F_q^*} \overline{\psi}(b a_1^{-1}x^d)\\
        &= - C_2(d)^s + \frac{\left( C_1(d)^s - C_2(d)^s\right)}{d} \cdot \sum_{x \in \F_q^*} \psi(-b a_1^{-1} x^d),\\
    \end{align*}
    \noindent where the sum $\sum_{x \in \F_q^*} \psi(- b a_1^{-1} x^d)$ is $S(-b/a_1, d)$, which equals $C_1(d)$ if $\eta_d(-b/a_1) = 1$ and $C_2(d)$ otherwise. This condition can be simplified.  Since $2r \mid m$, we have that $(q- 1)/(p^r + 1) = \sum_{j = 0}^{m/r - 1} p^{rj}$ is even, which implies $2 (p^r + 1) \mid q - 1 $. Then, the fact that $d \mid p^r + 1$ implies that $2d \mid (q - 1)$. Since $\alpha^{(q-1)/2} = -1$, it follows that $\eta_d(-1) = 1$. Therefore, the condition $\eta_d(-b/a_1) = 1$ is equivalent to $\eta_d(a_1/b) = 1$. This leads to the conclusion that,
    \begin{equation}\label{N*(f) com b ne 0}
        N^*(g) = \begin{cases}
        \frac{1}{q}\left( (q - 1)^s  - C_2(d)^s + \frac{C_1(d)\left( C_1(d)^s - C_2(d)^s\right)}{d}\right), & \text{ if } \eta_d(a_1/b) = 1,\\
        \frac{1}{q}\left( (q - 1)^s  - C_2(d)^s + \frac{C_2(d)\left( C_1(d)^s - C_2(d)^s\right)}{d}\right), & \text{ if } \eta_d(a_1/b) \ne 1.\\
    \end{cases}
    \end{equation}

\end{proof} 


For instance, let us consider the polynomial $g(x, y, z) = x^4 + y^4 + z^4 - 1 \in \F_{81} [x, y, z]$. This polynomial satisfies the condition $\eta_4(a_1/b)= \eta_4(1) = 1$, thus we use the first formula from Theorem \ref{teorema diagonal b ne 0}. Since $C_1(4) = -28$ and $C_2(4) = 8$ when $q = 81$, we have
\begin{align*}
    N^*(g) &= \frac{1}{81} \left(80^3 - 8^3 - 7 ((-28)^3 - 8^3) \right)\\
    &= 8256.
\end{align*}

As another example, consider the diagonal polynomial $g(x,y,z) = \alpha x^{17} + \alpha^{18} y^{17} - 1 \in \F_{256} [x,y,z]$, where $\alpha$ is a primitive element of $\F_{256}$ that is not a $17$-th power of any other element. The exponent $d = 17$ is $(2, 4)$-admissible, and since $\eta_{17}(a_1/b) = \eta_{17}(\alpha) \ne 1$, we use the second formula. Since $C_1(17) = 255$ and $C_2(17) = -17$ when $q = 256$, we get
\begin{align*}
    N^*(g) &= \frac{1}{256} \left(255^2 - (-17)^2 + \frac{(-17)}{17} ((255)^2 - (-17)^2) \right)\\
    &= 0.
\end{align*}
\newline
\hfill

For the next new result, we will utilize what was shown previously to determine the number of roots of full polynomials of the form in \eqref{polinomio cheio} that are $*$-equivalent to the previously analyzed family of diagonal polynomials with $(p,r)$-admissible exponents.

The number of solutions with at least one variable equal to zero is straightforward to compute: since every term in the polynomial contains every variable, if at least one variable is zero then every non-constant term vanishes. Thus when requiring at least one of the coordinates to be $0$, the number of roots is $0$ if $b \ne 0$ and $q^n - (q - 1)^n$ if $b = 0$. Hence, if we assume that a polynomial of the form \eqref{polinomio cheio} is $*$-equivalent to a polynomial $g(X)$ for which the number of $*$-roots is known, then we can calculate $N(f)$ as
\begin{equation}\label{N(f) para polinomios cheios}
    N(f) = \begin{cases}
        N^*(g), & \text{ if } b \ne 0;\\
        q^n - (q - 1)^n + N^*(g),& \text{ if } b = 0.
    \end{cases}
\end{equation}


\begin{theorem}\label{teorema principal cheio}
    Let $f$ be a full polynomial of the form (\ref{polinomio cheio}) in $\F_q[x_1, \dots, x_n]$ that is $*$-equivalent to a diagonal polynomial $g$ of the form (\ref{g todos ds iguais}) with $s$ variables, where $n \ge s$, $d \ge 3$ is $(p, r)$-admissible for some positive integer $r$, and $\eta_{d}(a_1) = \cdots = \eta_{d}(a_s)$.
    Then
    \footnotesize{
    $$N(f) = \begin{cases}
    q^n - (q - 1)^n + \frac{(q - 1)^{n - s}}{q} \left( (q - 1)^{s}+ \frac{q-1}{d} C_1(d)^s + \frac{(q - 1)(d - 1)}{d} C_2(d)^s \right),& \text{ if } b = 0,\\
     \frac{(q - 1)^{n - s}}{q} \left((q - 1)^s  - C_2(d)^s + \frac{C_1(d)\left( C_1(d)^s - C_2(d)^s\right)}{d}\right),& \text{ if } b \ne 0 \text{ and } \eta_d(a_1/b) = 1,\\
     \frac{(q - 1)^{n - s}}{q} \left((q - 1)^s  - C_2(d)^s + \frac{C_2(d)\left( C_1(d)^s - C_2(d)^s\right)}{d} \right),& \text{ if }b \ne 0 \text{ and } \eta_d(a_1/b) \ne 1,\\
    \end{cases}$$}
    \noindent where $C_1(d)$ and $C_2(d)$ are as stated in Definition \ref{definicao A e B}.
\end{theorem}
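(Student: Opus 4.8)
The plan is to combine two ingredients already developed in the paper: the reduction formula \eqref{N(f) para polinomios cheios} for full polynomials in terms of the number of $*$-roots of a $*$-equivalent diagonal polynomial, and the explicit formulas for $N^*(g)$ furnished by Theorem \ref{teorema diagonal caso b = 0} and Theorem \ref{teorema diagonal b ne 0}. The only genuinely new observation needed is that $*$-equivalence preserves $N^*$ but that $f$ lives in $n$ variables while $g$ lives in $s \le n$ variables, so one must track the extra factor coming from the $n-s$ variables that do not appear (in diagonalized form) in $g$.

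First I would invoke the hypothesis that $f$ is $*$-equivalent to $g$. By the remark following Lemma \ref{lema do caractere}, $*$-equivalence of polynomials in the \emph{same} number of variables yields equal numbers of $*$-roots. Here, however, $f \in \F_q[x_1,\dots,x_n]$ and $g \in \F_q[x_1,\dots,x_s]$ with $n \ge s$; I would regard $g$ as a polynomial in $n$ variables that happens not to involve $x_{s+1},\dots,x_n$. Then $N^*_{(\F_q^*)^n}(g) = (q-1)^{n-s}\, N^*_{(\F_q^*)^s}(g)$, since each of the $n-s$ absent variables ranges freely over $\F_q^*$. Combining this with the fact that $f$, viewed in $n$ variables, is $*$-equivalent to $g$ viewed in $n$ variables (the augmented degree matrices differ by an invertible row transformation, with extra zero rows/columns not affecting the solution set of the associated congruence), gives $N^*(f) = (q-1)^{n-s} N^*(g)$ where on the right $N^*(g)$ is the $s$-variable count computed earlier.

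Next I would apply \eqref{N(f) para polinomios cheios}: because $f$ is full, every non-constant monomial contains every variable, so setting any coordinate to $0$ kills every non-constant term, and the number of roots with at least one zero coordinate is $0$ when $b \neq 0$ and $q^n - (q-1)^n$ when $b = 0$. Therefore $N(f) = N^*(f)$ if $b \neq 0$, and $N(f) = q^n - (q-1)^n + N^*(f)$ if $b = 0$. Substituting $N^*(f) = (q-1)^{n-s} N^*(g)$ and then plugging in the three cases of $N^*(g)$ from Theorem \ref{teorema diagonal caso b = 0} (for $b=0$) and Theorem \ref{teorema diagonal b ne 0} (for $b \neq 0$, split according to whether $\eta_d(a_1/b) = 1$) yields exactly the three displayed formulas.

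The main obstacle, and the step deserving the most care, is the bookkeeping around the "extra" variables: one must check rigorously that $*$-equivalence between a full polynomial in $n$ variables and a diagonal polynomial in $s$ variables is compatible with the stated hypotheses, i.e. that the diagonal polynomial's augmented degree matrix, padded with the appropriate zero entries to size $(n+1) \times s$, is row-equivalent over $\Z_{q-1}$ to $\tilde{D}_f$, and that this padding does not change the solution set of the congruence $\tilde{D} v^T \equiv 0 \pmod{q-1}$ in a way that would alter $N^*$. Once that compatibility is confirmed, the factor $(q-1)^{n-s}$ is forced, and the rest is a direct substitution. I would also note in passing that the hypothesis $d \ge 3$ being $(p,r)$-admissible and $\eta_d(a_1) = \cdots = \eta_d(a_s)$ is precisely what is required to legitimately invoke the two diagonal theorems, so no additional arithmetic conditions enter at this stage.
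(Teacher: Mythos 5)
Your proposal is correct and follows essentially the same route as the paper: view $g$ as a polynomial in $n$ variables so that the $n-s$ free variables contribute a factor $(q-1)^{n-s}$ to $N^*(g)$, then substitute the diagonal counts from Theorems \ref{teorema diagonal caso b = 0} and \ref{teorema diagonal b ne 0} into the reduction formula \eqref{N(f) para polinomios cheios}. Your extra care about padding the augmented degree matrix with zero entries is a reasonable elaboration of a point the paper's proof leaves implicit, but it does not change the argument.
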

\begin{proof}    
    In this context, $g(x_1, \dots, x_s)$ is considered a polynomial in $\F_q[x_1, \dots, x_n]$, hence the $n - s$ variables $x_{s+1}, \dots, x_n$ are free and can take any value in $\F_q^*$ without affecting the value of the polynomial. Thus, the number of solutions $N_s^*(g)$ over $\F_q[x_1, \dots, x_n]$ is $(q - 1)^{n -s}$ times the number of $*$-roots over $\F_q[x_1, \dots, x_s]$ obtained from Theorems \ref{teorema diagonal caso b = 0} and \ref{teorema diagonal b ne 0}. Substituting these into (\ref{N(f) para polinomios cheios}) yields the result.
\end{proof}

Let us conclude with some examples. Let us fix $p =2$, $m = 4$ and $d = 5$ such that $q = 16$. Here $d$ is $(2,2)$-admissible. We will count the total number of roots of the polynomial $f(x, y, z) = x^6 y^2 z + x y^7 z^{11} \in \F_q[x,y,z]$, which is $*$-equivalent to $g(x,y,z) = x^5 + y^5$, since we have the following row-equivalence between the matrices $\tilde{D}_f$ and $\tilde{D}_g$: 
\begin{equation*}
    \begin{bmatrix}
        1 & 1 \\
        6 & 1 \\
        2 & 7 \\
        1 & 11\\
    \end{bmatrix}= 
    \begin{bmatrix}
        1 & 0 & 0 & 0\\
        1 & 1 & 0 & 0\\
        2 & 0 & 1 & 0\\
        1 & 0 & 2 & 1 
    \end{bmatrix}
    \begin{bmatrix}
        1 & 1 \\
        5 & 0 \\
        0 & 5 \\
        0 & 0\\
    \end{bmatrix}.
\end{equation*}
The total number of variables is $n = 3$, and the diagonal polynomial $g$ can be considered a polynomial in only $s = 2$ variables. We compute $C_1(5) = 15$ and $C_2(5) = -5$. Since the constant term is $0$, we can apply the first formula in Theorem \ref{teorema principal cheio} to obtain
\begin{align*}
    N(f) &= 
        16^3 - (15)^3 + \frac{15}{16}\left( 15^2 +  \frac{15}{5} 15^2 + \frac{15 \cdot 4}{5} (-5)^2\right)\\
        &= 1846.
\end{align*}
As another example, let us fix $p = 3$, $m = 6$, $d =7$, and $\alpha$ as a primitive element of $\F_q$ that is not a $7$-th power of any other element.  We will count the total number of roots of the polynomial $f(x,y) = x^7 + 2 x^7 y^{21} - \alpha \in \F_{729}[x,y]$, which is $*$-equivalent to $g(x,y) = x^7 + 2 y^{7} - \alpha$ since they have the same coefficient vector and $\tilde{D}_f$ and $\tilde{D}_g$ are row-equivalent:
\begin{equation*}
    \begin{bmatrix}
        1 & 1 & 1\\
        7 & 7 & 0\\
        0 & 21 & 0\\
    \end{bmatrix}= 
    \begin{bmatrix}
        1 & 0 & 0\\
        0 & 1 & 1\\
        0 & 0 & 3\\
    \end{bmatrix}
    \begin{bmatrix}
        1 & 1 & 1\\
        7 & 0 & 0\\
        0 & 7 & 0\\
    \end{bmatrix}.
\end{equation*}
The number of variables is the same for both polynomials, $n = s = 2$. Since $7$ is $(3,3)-$admissible, we compute $C_1(7) = 161$ and $C_2(7) = - 28$. Since $\eta_7(a_1/b) = \eta_7(\alpha) \ne 1$, we use the third formula in Theorem \ref{teorema principal cheio} to obtain
\begin{align*}
    N(f) &= 
        \frac{728^0}{729} \left( 728^2 - (-28)^2 + \frac{(-28)(161^2 - (-28)^2)}{7} \right) \\
        &= 588.
\end{align*}

\section*{Acknowledgements}

The author was supported in part by the Coordena\c{c}\~ao de Aperfei\c{c}oamento de Pessoal de N\'ivel Superior-Brazil (CAPES) - Finance Code 001.


\begin{thebibliography}{9}
\bibitem{argentinas}
M. P\'erez, M. Privitelli. \text{On the number of solutions of systems of certain diagonal equations over finite fields.} Journal of Number Theory,
vol. 236, pgs. 160-187, 2022.

\bibitem{hartshorne}
R. Hartshorne. \textit{Algebraic Geometry.} Springer, vol. 52, 1977.

\bibitem{gauss jacobi}
B. C. Berndt, R. J. Evans, Kenneth S. Williams.
\textit{Gauss and Jacobi Sums.}
Monographies et Études de la Société Mathématique du Canada, (1998).

\bibitem{artigo do jose}
J. A. Oliveira,
\textit{On diagonal equations over finite fields.}
Finite Fields and Their Applications, vol. 76, 101927, 2021.

\bibitem{carlitz}
L. Carlitz. \textit{Certain special equations in a finite field.} Monatshefte f\"ur Mathematik, vol. 58, pgs. 5-12, 1954.


\bibitem{baoulina}
I. Baoulina. \textit{On the number of solutions of the equation }$a_1x_1^{m_1}+ ... +a_nx_n^{m_n}=bx_1 ... x_n$\textit{ in a finite field}. Acta Applicandae Mathematicae, vol. 85, pgs. 35-39, 2005.

\bibitem{calabi yau 1}
Lei Fu, Danqing Wan. \textit{Mirror congruence for rational points on Calabi-Yau varieties.} Asian Journal of Mathematics, vol. 10, pgs. 1-10, 2006.

\bibitem{calabi yau 2}
A. Rojas-Leon, Daqing Wan. \textit{Moment zeta functions for toric Calabi-Yau hypersurfaces.} Communications in Number Theory and Physics, vol. 1, pgs. 539–578, 2007.


\bibitem{cao 2010}
Wei Cao.
\textit{On generalized Markoff-Hurwitz-type Equations
over finite fields.}
Acta Applicandae Mathematicae, vol. 112, pgs. 275–281, 2010.


\bibitem{wei cao}
Kun Jiang, Wei Gao, Wei Cao.
\textit{Counting solutions to generalized Markoff-Hurwitz-type equations in finite fields.}
Finite Fields and Their Applications, vol. 62, February 2020.

\bibitem{meu artigo}
J. G. Coelho,  F. E. Brochero Martínez.
\textit{Counting roots of fully triangular polynomials over finite fields.} Finite Fields and Their Applications, vol. 94, 102345, 2024.

\bibitem{cao 2017}
Ruyun Wang, Binbin Wen, Wei Cao.
\textit{Degree matrices and enumeration of rational points of some hypersurfaces over finite fields.}
Journal of Number Theory, vol. 177, pgs. 91-99, 2017.



\bibitem{pure gauss sums}
R. J. Evans.
\textit{Pure Gauss sums over finite fields.}
Mathematika, vol. 28(2), pgs. 239-248, 1981.

































\bibitem{lidl finite fields}
R. Lidl, H. Niederreiter. \textit{Finite Fields.} Encyclopedia of Mathematics and its Applications, Cambridge University Press, 2nd ed., (1997).



\end{thebibliography}
\end{document}